\newtheorem{thm}{Theorem} \newtheorem{lemma}{Lemma} \newtheorem{propo}{Proposition} \newtheorem{coro}{Corollary}
\definecolor{red1}{rgb}{1,0.9,0.9} \definecolor{blue1}{rgb}{0.9,0.9,1} \definecolor{green1}{rgb}{0.9,1,0.9} 
\definecolor{yellow1}{rgb}{1,1,0.9} \definecolor{yellow2}{rgb}{1,1,0.8}
\let\paragraph\subsection
\newcommand{\str}{\rm{str}}
\newcommand{\NN}{\mathbb{N}}
\title{One can hear the Euler characteristic of a simplicial complex}
\author{Oliver Knill}
\date{November 26, 2017}
\address{Department of Mathematics \\ Harvard University \\ Cambridge, MA, 02138 }
\subjclass{05C99, 55U10, 68R05}
\keywords{Simplicial complex, Isospectral Geometries, Euler Characteristic}
\begin{document}
\maketitle

\begin{abstract}
We prove that that the number $p$ of positive eigenvalues of the 
connection Laplacian $L$ of a finite abstract simplicial complex $G$ matches the number
$b$ of even dimensional simplices in $G$ and that the number $n$ of negative eigenvalues
matches the number $f$ of odd-dimensional simplices in $G$. The Euler characteristic
$\chi(G)$ of $G$ therefore can be spectrally described as $\chi(G)=p-n$. This is in contrast
to the more classical Hodge Laplacian $H$ which acts on the same Hilbert space, 
where $\chi(G)$ is not yet known to be accessible from the spectrum of $H$.
Given an ordering of $G$ coming from a build-up as a CW complex, every simplex
$x \in G$ is now associated to a unique eigenvector of $L$ and the correspondence
is computable. The Euler characteristic is now not only the
potential energy $\sum_{x \in G} \sum_{y \in G} g(x,y)$
with $g=L^{-1}$ but also agrees with a logarithmic energy
${\rm tr}(\log(i L)) 2/(i \pi)$ of the spectrum of $L$.
We also give here examples of isospectral but non-isomorphic 
abstract finite simplicial complexes. One example shows that we can not hear the 
cohomology of the complex. 
\end{abstract}

\section{Introduction}

\paragraph{}
The energy theorem \cite{Spheregeometry} identifies the Euler characteristic 
$\chi(G)$ of a finite abstract simplicial complex $G$ as a potential theoretical energy 
$$   E(G) = \sum_{x \in G} \sum_{y \in G} g(x,y)  \; , $$ 
where $g=L^{-1}$ is the Green's function, an integer matrix. Is there a spectral interpretation of 
Euler characteristic $\chi(G)=\sum_x (-1)^{{\rm dim}(x)}$ for a self-adjoint Laplacian of 
a simplicial complex? We answer this question with yes. We are not aware of any other self-adjoint 
Laplacian for a simplicial complex which determines $\chi(G)$ from its spectrum.  \\

\paragraph{}
There is a non-selfadjoint matrix which produces $\chi(G)$: 
if $\Gamma$ is an acyclic directed graph with $m$ vertices, define 
$A(i,j)=1$ if and only if $(i,j)$ is not an edge and $0$ else. 
Define the diagonal $D={\rm Diag}(x_1, \dots, x_m)$. In \cite{Stanley1996a}, 
the determinant formula ${\rm det}(1+D A) = \sum_{x} x_{1} x_{2} ,\dots x_{|x|}$ 
is proven, where the sum is over all paths $x$ in the graph and where $|x|$ denotes the number of vertices
in $x$. Applying this to the graph where $G$ is the vertex set and $(x,y)$ is 
and edge if $x \subset y$, and $x_j=-1$, then $A(i,j)=-1$ if and only if $i$ is not a subset of $j$. 
and $0$ else, Bera and Muherjee note that this implies ${\rm det}(A) = (1-\chi(G))$ \cite{BeraKMukherjee}. 
As ${\rm det}(A)$ is the product of the
eigenvalues of a matrix $A$ defined by $G$, this is a spectral description. But the matrix $A$ is
not symmetric and in general has complex eigenvalues so that it can not be conjugated to a symmetric matrix.  \\

\paragraph{}
Besides pointing out that the spectrum of the connection Laplacian $L$ determines the Euler characteristic 
we also show that there is a canonical bijection between simplices of $G$ ordered as a CW complex
and the eigenvalues of $L$. Every simplex has so a well defined energy value and explicitly defined 
eigenvector of $L$ in the Hilbert space.
The simplices, the basic building blocks of a simplicial complex therefore have a particle feature.
They manifest as eigenstates of the energy matrix $g=L^{-1}$. Our result extends to the 
strong ring generated by simplicial complexes. For these more general objects, still the number of positive
minus the number of negative eigenvalues is the Euler characteristic. 
We already knew that the total energy $\chi(G)$ is a ring 
homomorphism from the ring to the integers and that the eigenvectors of the connection Laplacian are
multiplicative. Now, an eigenvalue to $\lambda \mu$ of $G \times H$ can 
be associated to a specific product simplex $x \times y$ with $x \in G, y \in H$. 

\paragraph{}
If $L$ is the connection Laplacian of a finite abstract simplicial complex $G$
defined by $L(x,y)={\rm sign}(|x \cap y|)$ then by definition,
the {\bf super trace} $\sum_{x \in G} \omega(x) L(x,x)$ of $L$ is the Euler 
characteristic $\chi(G) = \sum_{x \in G} \omega(x) = b-f$, where $\omega(x) = (-1)^{{\rm dim}(x)}
= (-1)^{|x|-1}$, where $b=|\{ x \in G \; | \; |x| \in 2\NN-1 \}|$
is the number of even-dimensional simplices and 
$f=|\{ x \in G \; | \; |x| \in 2\NN \}|$ is the number 
odd-dimensional simplices in $G$. The trace of $L$ is $b+f=|G|$ is a spectral invariant. 
The super trace however is not unitary invariant: it does not provide a spectral description 
of $\chi(G)$. We can not see $\chi(G)$ from the eigenvalues of the
Hodge Laplacian $H(G)=(d+d^*)^2$ yet. We know that $H$ decomposes into blocks $H_k(G)$ which 
have as nullity the Betti numbers $b_k(G)$. By Euler-Poincar\'e, one has then
$\chi(G) = \sum_{k=0}^{\infty} (-1)^k b_k(G)$. 
If we look at the spectrum of $H$ as a set of real numbers
without any labels, we can not see whether an eigenvalue comes from odd or even forms. 

\paragraph{}
The Dirac operator $D=(d+d^*)$ defines a super-symmetry between the nonzero 
eigenvalues of $H$ restricted to the even and odd dimensional part of the 
exterior bundle. The symmetry is $f \to D f$. This implies the celebrated 
McKean-Singer formula 
$\chi(G) = \str(\exp(-t H))$ which followed from the fact that $\str(H^k)=0$
for all $k>0$ and $\str(1)=\chi(G)$ \cite{McKeanSinger,knillmckeansinger}. 
Still, also here, as the super trace refers to the $\omega(x)$, 
the Euler characteristic is not yet a true spectral
property. Applying a unitary base change which exchanges some odd and
even dimensional parts can change the super trace. There is an 
analogue McKean-Singer formula 
$$  \chi(G) = \str(L^{-1}) = \str(L^1) = \str(L^0)  $$ 
for $L$. But the lack of super symmetry for $L$ does not allow us to find identities 
$\chi(G) = \str(L^{-t})$ for $t$ different from $0,1$ or $-1$. 

\section{A spectral formula}

\paragraph{}
We formulate now a spectral interpretation of the Euler characteristic $\chi=\chi(G)$.
Let $p=p(G)$ be the number of positive eigenvalues of the connection Laplacian $L$
and let $n=n(G)$ be the number of negative eigenvalues of $L$. Let $b(G)$ be the number
of even-dimensional simplices in $G$ and $f(G)$ denote the number of odd-dimensional 
simplices in $G$.

\begin{thm}[Spectral formula for Euler characteristic] 
For any complex $G$ we have $p(G)=b(G)$ and $n(G)=f(G)$ so that $\chi=p-n$.  
\end{thm}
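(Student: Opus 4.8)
The plan is to exploit the ordering of $G$ mentioned in the introduction together with the unimodularity of $L$. First I would fix an ordering $x_1, x_2, \dots, x_m$ of the $m=|G|$ simplices that is a linear extension of the face relation, so that $x_i \subset x_j$ forces $i<j$; this is exactly what a build-up of $G$ as a CW complex provides. The decisive structural observation is that every initial segment $G_k = \{x_1, \dots, x_k\}$ is then itself a simplicial complex (it is closed under taking faces), and that the leading $k \times k$ principal submatrix $L_k$ of $L$ is precisely the connection Laplacian of $G_k$, since $L(x,y)=\mathrm{sign}(|x\cap y|)$ depends only on the pair $(x,y)$ and not on the ambient complex. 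Because $L$ is real symmetric it has only real eigenvalues, and because $g=L^{-1}$ is an integer matrix (forcing $\det L = \pm 1$) it has trivial kernel; hence $p+n = m$ and it suffices to compute one of $p,n$.

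Second, I would invoke the classical Jacobi--Sylvester description of inertia in terms of leading principal minors: if a real symmetric matrix has all of its leading principal minors $D_0=1, D_1, \dots, D_m$ nonzero, then the number of negative eigenvalues equals the number of sign changes in the sequence $D_0, D_1, \dots, D_m$, and the number of positive eigenvalues equals the number of sign permanences. The cleanest route is to perform the symmetric $LDL^{\!\top}$ factorization (Gaussian elimination with no pivoting, possible exactly because no leading minor vanishes), note that $L$ is congruent to the diagonal matrix of pivots $d_k = D_k/D_{k-1}$, and apply Sylvester's law of inertia: the sign of the $k$-th contribution is the sign of $d_k$, which is negative precisely when $D_{k-1}$ and $D_k$ have opposite signs.

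Third, I would evaluate the minors. By the unimodularity theorem of \cite{Spheregeometry} applied to the subcomplex $G_k$, one has $D_k = \det(L_k) = \prod_{x \in G_k} \omega(x) = (-1)^{f(G_k)}$, where $f(G_k)$ counts the odd-dimensional simplices among $x_1,\dots,x_k$; in particular every $D_k$ is $\pm 1$, so the hypothesis of the inertia theorem is satisfied. Passing from $G_{k-1}$ to $G_k$ adds the single simplex $x_k$, so $f(G_k)-f(G_{k-1})$ is $1$ when $x_k$ is odd-dimensional and $0$ otherwise; equivalently $D_{k-1}$ and $D_k$ differ in sign exactly when $x_k$ is odd-dimensional. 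Counting, the number of sign changes is the number $f$ of odd-dimensional simplices and the number of sign permanences is the number $b$ of even-dimensional simplices, which by the inertia theorem yields $n=f$ and $p=b$, and therefore $\chi(G)=b-f=p-n$.

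I expect the main obstacle to be the bookkeeping that makes the reduction to subcomplexes rigorous: verifying that the initial segments of a face-compatible ordering really are subcomplexes whose connection Laplacian is the corresponding leading block $L_k$, and that the sign of $\det(L_k)$ is governed by $f(G_k)$ rather than merely by the global parity. Everything else is either the standard inertia-from-minors lemma or a direct count, but the nonvanishing of \emph{all} intermediate minors---supplied for free by unimodularity---is the single input without which the leading-minor criterion would collapse.
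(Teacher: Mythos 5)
Your proof is correct, and it takes a genuinely different route from the paper's. The paper argues by induction on a CW build-up: when a cell $x$ is attached along a sphere $A$, it deforms the new row and column by a parameter $t$ and shows $\det(K(t)) = (1 - t\,\chi(A))\det(L)$ via Laplace expansion, backed by a lemma computing the Green entries $g(x,y)=\omega(y)$ for facets; since $\chi(A)$ is $0$ for an even-dimensional cell and $2$ for an odd-dimensional one, either no eigenvalue crosses zero during the deformation from $K(0)$ to $K(1)$, or exactly one does (at $t=1/2$, a simple root), so the inertia gains $(1,0)$ or $(0,1)$ accordingly. You replace this spectral-flow argument by pure congruence algebra: a face-compatible ordering makes every leading principal submatrix of $L$ the connection Laplacian of a subcomplex, the signed unimodularity theorem gives $D_k=(-1)^{f(G_k)}\neq 0$, and the Jacobi/$LDL^{\top}$ criterion together with Sylvester's law of inertia turns the sign pattern of the sequence $(D_k)$ into the inertia of $L$. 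The two proofs consume the same arithmetic input---your pivot $D_k/D_{k-1}=\omega(x_k)$ is exactly the incremental determinant statement that the paper's proposition yields at $t=1$---but they convert it into spectral information differently: the paper by a homotopy and an eigenvalue-crossing count, you by congruence. Your route is shorter and neatly resolves the caveat the paper itself raises, namely that unimodularity ``does not prove the theorem yet'' because an attachment could in principle flip eigenvalues in pairs: applied to \emph{all} initial segments and in signed form, it does suffice, precisely via the Jacobi criterion. What the paper's deformation buys beyond the theorem is the explicit cell-to-eigenvector correspondence advertised in the abstract, obtained by following the eigenvalue branches of $K(t)$; your argument does not produce that. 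Two minor points: the unimodularity theorem should be cited as \cite{Unimodularity} rather than \cite{Spheregeometry} (the latter is the energy theorem), and your proof extends verbatim to discrete CW complexes, as the paper's does, since unimodularity holds there and the build-up order plays the role of your linear extension.
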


\paragraph{}
As the proof will show, the result holds more generally for discrete CW complexes, where cells
are inductively added to spheres. It also holds for products of simplicial complexes which 
are no simplicial complexes any more. This can appear strange at first as we are familiar with 
the Hodge situation, which features a complete symmetry between positive and negative energy states.
While for the Hodge operator $H$ which lives on the same Hilbert space, the harmonic forms
are topologically interesting, it appears that the other eigenvalues and eigenvectors
change under coordinate changes. For the connection operator $L$ -  which has no kernel -
all eigenvectors appear to be of topological interest.

\paragraph{}
We have seen that the Hodge Laplacian $H$ is ``spectrally additive" in the sense that
the eigenvalues of $H(A \times B)$ are the sum of the eigenvalues of $H(A)$ 
and $H(B)$ (as for the Laplacian in the continuum) and that $L$ is ``spectrally multiplicative"
in that the eigenvalues of $L(A \times B)$ are the product of the eigenvalues of 
$L(A)$ and $L(B)$, which has no continuum analogue yet \cite{StrongRing}.
We see from this that the spectral formula goes over to the strong ring generated
by simplicial complexes. An eigenvector of $A \times B$ which corresponds to a 
positive energy state can either be decomposed into two positive energy eigenstates
or two negative eigenstates. 

\paragraph{}
{\bf Example 1:} Let $G=\{\{1\}$,$\{2\}$,$\{3\}$,$\{1,2\}$,$\{2,3\}\}$. Then 
$$ L=\left[ \begin{array}{ccccc} 1 & 0 & 0 & 1 & 0 \\ 0 & 1 & 0 & 1 & 1 \\
                  0 & 0 & 1 & 0 & 1 \\ 1 & 1 & 0 & 1 & 1 \\ 0 & 1 & 1 & 1 & 1 \\ \end{array} \right]  $$
which has the eigenvalues $(1 \pm \sqrt{5})/2$, $(3 \pm \sqrt{13})/2$ and $1$. 
There are $p=3$ positive eigenvalues and $n=2$ negative ones. 
This matches the Euler characteristic 
$\chi(G)=b-f=3-2$. 
%  s=CubeGraph; G=GraphComplex[s]; L=ConnectionLaplacianComplex[G]; Eigenvalues[L] 

{\bf Example 2:} The cube graph complex
$G=\{\{1\}$,$\{2\}$, $\{3\}$, $\{4\}$, $\{5\}$, $\{6\}$, $\{7\}$, $\{8\}$, $\{1,2\}$,
$\{1,3\}$, $\{1,4\}$, $\{2,5\}$, $\{2,6\}$, $\{3,5\}$, $\{3,7\}$, $\{4,6\}$, $\{4,7\}$, 
$\{5,8\}$, $\{6,8\}$, $\{7,8\}\}$ has a Laplacian $L$ 
with eigenvalues $\{ 3 \pm \sqrt{10}$, $(2 \pm \sqrt{5})^{(3)}$, $(1\pm \sqrt{2})^{(3)}$, $(-1)^{(5)}$, $1 \}$.
There are $p=8$ positive and $n=12$ negative eigenvalues. (There are $4$ negative eigenvalues $-1$ which do 
not match up naturally). There are $b=8$ zero dimensional simplices and $f=12$ one dimensional simplices 
so that $\chi(G)=8-12=-4$. 

{\bf Example 3:} For the cyclic complex $G=C_4=\{\{1\}$, $\{2\}$, $\{3\}$, $\{4\}$, $\{1,2\}$, $\{1,4\}$, $\{2,3\}$, $\{3,4\}\}$,
the eigenvalues are $\{ 2 \pm \sqrt{5}$, $(1 \pm \sqrt{2})^{(2)}$, $1$, $-1 \}$, 
featuring four positive and four negative eigenvalues, matching $\chi(G)=0$. 

\section{Proof}

\paragraph{}
We have seen that $g(x,x) = 1-\chi(S(x))$, where $S(x)$ is the unit sphere of $x$ 
in the graph $G_1=(V,E)$, where $V=G$ and where $(a,b) \in E$ if and only if 
$a \subset b$ or $b \subset a$. What are the other entries $g(x,y)$? 
An element $x$ in $G$ is called a {\bf facet}, if it is not contained in any other
simplex $y$ of $G$. 

\begin{lemma}
If $x$ is a facet and $y \subset x$, then $g(x,y)= \omega(y)$. 
\end{lemma}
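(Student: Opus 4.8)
The plan is to produce the entire row $g(x,\cdot)$ explicitly and then read off the entries indexed by faces of $x$. Since $x$ is a facet of the simplicial complex $G$, every nonempty subset $z\subseteq x$ is again a simplex of $G$, so I can define a vector $w\in\RR^G$ by $w(z)=\omega(z)$ whenever $z\subseteq x$ and $w(z)=0$ otherwise. I claim $w$ is the $x$-th column of $g=L^{-1}$. Because $L$ is invertible (its determinant is $\pm1$), it suffices to verify the single linear identity $Lw=e_x$, where $e_x$ is the standard basis vector at $x$; the lemma then follows since $g(x,y)=g(y,x)=w(y)=\omega(y)$ for every face $y\subseteq x$ by symmetry of $g$ (and in fact this even yields $g(x,x)=\omega(x)$).

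Writing out $Lw=e_x$ using $L(a,z)=\mathrm{sign}(|a\cap z|)$, the claim reduces to the combinatorial statement that for every simplex $a\in G$,
\begin{equation*}
T(a):=\sum_{\substack{\emptyset\neq z\subseteq x\\ z\cap a\neq\emptyset}}(-1)^{|z|-1}=\delta_{a,x}.
\end{equation*}
I would evaluate $T(a)$ by setting $S=a\cap x$ and noting that, for $z\subseteq x$, the condition $z\cap a\neq\emptyset$ is precisely $z\cap S\neq\emptyset$. Splitting each admissible $z$ into its intersection with $S$ and its intersection with $x\setminus S$ factorizes the sum as
\begin{equation*}
T(a)=-\Bigl(\sum_{\emptyset\neq u\subseteq S}(-1)^{|u|}\Bigr)\Bigl(\sum_{v\subseteq x\setminus S}(-1)^{|v|}\Bigr),
\end{equation*}
where the second factor is the binomial sum $(1-1)^{|x\setminus S|}$.

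The two appearances of the facet hypothesis are exactly what close the argument. First, maximality guarantees that the support of $w$ consists of genuine simplices of $G$, as used in its definition. Second, it controls the vanishing of the second factor: $(1-1)^{|x\setminus S|}$ is $0$ unless $x\setminus S=\emptyset$, i.e.\ unless $x\subseteq a$; but since $x$ is a facet and $a\in G$, the inclusion $x\subseteq a$ forces $a=x$. Hence $T(a)=0$ for all $a\neq x$, while for $a=x$ one has $S=x$, the second factor evaluates to $0^0=1$, and $\sum_{\emptyset\neq u\subseteq x}(-1)^{|u|}=(1-1)^{|x|}-1=-1$, giving $T(x)=1$. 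This establishes $Lw=e_x$ and hence the lemma.

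The computation itself is short, so the only genuine obstacle is guessing the correct candidate $w$: one must recognize that for a facet the Green's function row is supported precisely on the face poset of $x$, taking the value $\omega$ there, rather than spreading onto cofaces or onto simplices merely meeting $x$. Once the support is right, the verification is pure inclusion-exclusion, and the use of maximality is isolated entirely in the implication $x\subseteq a\Rightarrow a=x$. I would take care with the boundary cases $S=\emptyset$ (where $a$ is disjoint from $x$ and $T(a)=0$ trivially) and with the convention $0^0=1$ at $a=x$, since those are the two places where a careless factorization could slip.
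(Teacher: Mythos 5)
Your proof is correct, but it takes a genuinely different route from the paper's. The paper argues by induction on $|G|$: it orders the simplices so that the facet $x$ comes last, writes $g(x,y)$ via Cramer's rule as a ratio of determinants, and evaluates the relevant minor by Laplace expansion along row $x$, invoking the induction hypothesis for the smaller complex $G \setminus \{x\}$ together with the identity $\sum_{z \subseteq x,\, z \in S(y)\cup\{y\}} \omega(z) = 1-\chi(S(y)) = \omega(y)$. You instead guess the entire column of $g$ at once, $w(z)=\omega(z)$ for $\emptyset \neq z \subseteq x$ and $w(z)=0$ otherwise, and verify $Lw=e_x$ by a direct inclusion-exclusion computation whose key cancellation is the binomial factor $(1-1)^{|x\setminus S|}$. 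Your route is more elementary and self-contained: it needs no induction and no determinant manipulation, only the existence of $g=L^{-1}$ (which the statement presupposes anyway), and it yields strictly more information --- the column vanishes off the face poset of $x$, and $g(x,x)=\omega(x)$, consistent with $g(x,x)=1-\chi(S(x))$ since for a facet $S(x)$ consists only of the proper faces of $x$. What the paper's Cramer/Laplace formulation buys in exchange is the minor statement that $\det(L[y,x])$ equals $\omega(y)$ up to the Cramer sign, which is exactly the form quoted and reused in the subsequent Proposition on $\det(K(t))$, and its inductive structure matches the CW-complex build-up used for the main theorem. One small correction to your accounting of hypotheses: your first claimed use of the facet property --- that every nonempty $z \subseteq x$ lies in $G$ --- is not a consequence of maximality but of the simplicial-complex axiom (closure under nonempty subsets); the facet hypothesis enters your argument in exactly one place, the implication $x \subseteq a \Rightarrow a = x$, which kills the factor $(1-1)^{|x\setminus S|}$ for all $a \neq x$.
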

\begin{proof}
We can rearrange the simplices so that the facet $x$ is the last one. 
The vector $\{ g(x,y) \; | \; y=1, \dots, n\}$ is then the last column of $g$. 
The Cramer formula tells that $g(x,y)$ is $\det(L(y))/\det(L)$, where $L(y)$
is the matrix $L$ in which column $x$ has been replaced by $e(y)$. 
We prove the statement by induction with respect to the number of elements $|G|$ 
of $G$. Now make a Laplace expansion with respect to row $x$. This gives a sum,
using induction as we now have a simplicial complex $G \setminus \{x\}$ and 
$\sum_{z \subset x, z \in S(y) \cup \{ y \} } w(z) = 1-\chi(S(y)) = \omega(y)$. 
\end{proof}

\paragraph{}
The proof of the theorem is done by induction on the number $|G|$ of elements in $G$. 
As in the proof of the unimodularity theorem \cite{Unimodularity}, we prove the result more generally for discrete
CW complexes, where we have a finer built-up and so a stronger induction assumption during the proof. 
Given a connection Laplacian $L$ to a complex $G$; if we add a new cell $x$ and consider 
the complex $H=G +_A x$ in which $x$ is attached to $A$. Let $L$ be the connection Laplacian 
of $G$ and $K$ the connection Laplacian of $H$. Define $K(t)$ as the matrix defined by 
$K(t)(y,z) = K(y,z)$ if $z$ is different from $x$ and $K(t)(y,x)= t K(y,x)$ if
$y \neq x$. 

\begin{propo}
${\rm det}(K(t)) = (1-t \cdot \chi(A)) {\rm det}(L)$. 
\end{propo}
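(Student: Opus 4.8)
The plan is to exploit that the parameter $t$ enters $K(t)$ only through the off-diagonal entries of the single column indexed by the new cell $x$, and that each of those entries is linear in $t$ while the diagonal entry $K(x,x)=1$ is untouched. Since the determinant is multilinear in the columns, this makes $\det K(t)$ an \emph{affine} function of $t$, of degree at most one. An affine function is pinned down by its values at two points, so I would simply evaluate $\det K(t)$ at $t=0$ and at $t=1$ and then interpolate, rather than tracking the full expansion.

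At $t=0$ the column indexed by $x$ collapses to the standard basis vector $e_x$, because every off-diagonal entry of that column is multiplied by $0$ while the diagonal $1$ survives. A Laplace expansion along this column gives $\det K(0)=\det(\text{minor})$, and the minor obtained by deleting row and column $x$ has entries $K(y,z)={\rm sign}(|y\cap z|)$ for $y,z\in G$, i.e.\ it is exactly the connection Laplacian $L$ of $G=H\setminus\{x\}$. Hence $\det K(0)=\det(L)$.

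At $t=1$ we have $K(1)=K$, the genuine connection Laplacian of $H$. Here I would invoke the unit-sphere energy formula $g(x,x)=1-\chi(S(x))$ recorded above, applied to $H$: writing $g_H=K^{-1}$, the unit sphere $S_H(x)$ of $x$ in the connection graph of $H$ is precisely the attaching set $A$, so $g_H(x,x)=1-\chi(A)$. On the other hand, Cramer's rule identifies $g_H(x,x)$ as the $(x,x)$ cofactor divided by $\det(K)$, and that cofactor is again the minor deleting row and column $x$, namely $\det(L)$; thus $1-\chi(A)=\det(L)/\det(K)$. In the inductive CW build-up the attaching set $A$ is a sphere, so $\chi(A)=1+(-1)^{\dim A}\in\{0,2\}$ and $1-\chi(A)=\pm1$ is its own reciprocal, giving $\det(K)=(1-\chi(A))\det(L)$. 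Interpolating the affine function through the two computed values then finishes the proof:
\[ \det K(t)=(1-t)\det K(0)+t\,\det K(1)=\bigl[(1-t)+t(1-\chi(A))\bigr]\det(L)=(1-t\chi(A))\det(L). \]

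The step I expect to be the main obstacle is the $t=1$ evaluation, and specifically the bookkeeping that makes it legitimate: one must check that the attaching sphere $A$ really coincides with the unit sphere $S_H(x)$ of the new cell in the connection graph of $H$ (so that the sphere formula applies) and that $A$ being a sphere is what supplies the reciprocal-sign identity. It is worth noting that comparing this interpolation with a direct multilinear expansion of the coefficient of $t$ yields, as a free by-product, the restricted-energy identity $\sum_{y,z\in A}g(y,z)=\chi(A)$ for the ambient Green's function $g=L^{-1}$ of $G$. A direct proof of that identity would require a locality property of the Green's function, which is exactly what passing through the value at $t=1$ lets us avoid.
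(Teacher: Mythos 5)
Your proof is correct, but it takes a genuinely different route from the paper's. The paper proves the identity in one stroke: a Laplace expansion of $\det(K(t))$ along the column of $x$, in which each cofactor is identified individually via the Lemma (rephrased: the minor $\det(L[y,x])$ equals $\omega(y)$ for $y\subset x$), so that summing $\sum_{y\in A}\omega(y)=\chi(A)$ produces the coefficient of $t$ directly. You instead observe that $\det(K(t))$ is affine in $t$ and pin it down by its two endpoint values, importing the value $\det(K(1))=(1-\chi(A))\det(L)$ from earlier results: the diagonal Green function formula $g(x,x)=1-\chi(S(x))$, Cramer's rule, and unimodularity of both $L$ and $K$. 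Within this paper's logic that is legitimate and not circular, since both ingredients are quoted as known from prior work (indeed the paper itself records that attaching a cell along a sphere multiplies the determinant by $1-\chi(A)$, which is exactly your $t=1$ evaluation). The trade-off is this: your argument is shorter and avoids all cofactor bookkeeping, but it consumes the unimodularity theorem for the enlarged complex $H$ and genuinely needs $A$ to be a sphere --- if $\chi(A)=1$ then $K$ is singular and your Cramer step has nothing to divide by --- whereas the paper's expansion is self-contained, re-derives the unimodularity step as the special case $t=1$, and remains valid for an arbitrary attaching set $A$.

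Two smaller points. First, the sphere in the quoted formula $g(x,x)=1-\chi(S(x))$ is the unit sphere of $x$ in the containment graph $G_1$ (edges $a\subset b$ or $b\subset a$), not in the connection graph of nonempty intersections; in the latter the sphere of $x$ is in general strictly larger than $A$ (e.g.\ in Example 1 of the paper, $\{1,2\}$ meets the facet $\{2,3\}$ without being contained in it). So the check you flag as the main obstacle succeeds, but only with the correct graph. Second, your closing by-product identity is not quite what coefficient matching gives: comparing the coefficient of $t$ with your interpolation yields $\sum_{y,z} K(y,x)K(x,z)\,g(z,y)=\chi(A)$, a sum over all cells meeting $x$, which collapses to a sum over $A$ only when the column of $x$ is supported on $A$.
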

\begin{proof}
Make a Laplace expansion with respect to the last column. The formula
$$ {\rm det}( \left[ \begin{array}{ccccccc}
 L_{11} & L_{12} & . & . & . & L_{1n} & t_1 K_{1,x}\\
 L_{21} & L_{22} & . & .  & . & . & t_2 K_{2,x} \\
 . & . & . & . & . & . & t_3 K_{3,x} \\
 . & . & . & . & . & . & .   \\
 . & . & . & . & . & . & .   \\
 . & . & . & . & . & . & .   \\
 L_{n1} & . & . & . & . & L_{nn} & t_n K_{n,x} \\
 1 & 1 & 1 & 1 & 1 & 1 & 1 \\
\end{array} \right]) = {\rm det}(L) - \sum_{y \subset x} t_y {\rm det}(L) \omega(y) \;  $$
immediately proves the statement. Just set all variables $t_k=t$. 
We have used the Lemma rephrased that the minor $\det(L[y,x])$ 
is $\omega(y)$. 
\end{proof}

\paragraph{}
We see that during the deformation $K(0)$ to $K(1)$, when adding an even-dimensional 
cell, all eigenvalues stay on the same side of $0$. When adding an odd dimensional
cell, the new eigenvalue $1$ which is present in $K(0)$ but not present in $L$
crossed from $t=0$ to $t=1$ from the positive to the negative side and no other eigenvalue 
crosses, as the characteristic polynomial has a simple root there. 
The formula also shows that if $\chi(A)=0$, then $\det(K(t))$ is constant and that $\chi(K(t))$
changes linearly in all cases. 

\begin{coro}
a) If $\chi(A)=0$, then the matrix $K(t)$ is invertible for all $t$. The number of positive
eigenvalues of $K$ has increased by $1$ and the number of negative eigenvalues has remained the same. \\
b) If $\chi(A)=2$, then the matrix $K(t)$ is not invertible exactly for $t=1/2$ and has
a one-dimensional kernel. The number of positive eigenvalues of $K$ and $L$ are the same. 
The number of negative eigenvalues has increased by $1$. 
\end{coro}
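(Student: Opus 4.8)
The plan is to deduce all four assertions from the single determinant identity $\det(K(t)) = (1 - t\,\chi(A))\det(L)$ of the Proposition, together with the invertibility $\det(L) = \pm 1$ of $L$ furnished by the unimodularity theorem \cite{Unimodularity}. Writing $K(t) = \bigl(\begin{smallmatrix} L & t v \\ v^{T} & 1 \end{smallmatrix}\bigr)$ with $v_{y} = K(y,x)$, I would first dispose of the fact that $K(t)$ fails to be symmetric for $t \neq 1$: conjugating by the real diagonal matrix $\mathrm{diag}(1,\dots,1,t^{-1/2})$ shows that for every $t > 0$ the matrix $K(t)$ is similar to the symmetric matrix $\bigl(\begin{smallmatrix} L & \sqrt{t}\,v \\ \sqrt{t}\,v^{T} & 1 \end{smallmatrix}\bigr)$. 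Hence for $t > 0$ the spectrum of $K(t)$ is real and its inertia (the counts of positive, negative and zero eigenvalues) is well defined and varies continuously in $t$. At the endpoint $t = 0$ the matrix $K(0)$ is block lower triangular, so its spectrum is $\mathrm{spec}(L) \cup \{1\}$ and its inertia is $(p(G)+1,\, n(G),\, 0)$, the extra positive eigenvalue being the contribution of the freshly attached cell.

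Next I would read off the invertibility statements directly. Since $\det(L) = \pm 1$, the identity $\det(K(t)) = (1 - t\,\chi(A))\det(L)$ shows in case (a), where $\chi(A) = 0$, that $\det(K(t)) = \det(L) \neq 0$ for all $t$, so $K(t)$ is invertible throughout; and in case (b), where $\chi(A) = 2$, that $\det(K(t)) = (1-2t)\det(L)$ vanishes exactly at $t = 1/2$. To see that the kernel there is one-dimensional, I would use that the top-left block $L$ is nonsingular, whence $\mathrm{rank}\,K(t) \geq |G|$ and the nullity of $K(t)$ is at most $1$; combined with $\det(K(1/2)) = 0$ this forces the nullity to be exactly $1$.

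For the inertia counts I would invoke a Schur-complement congruence relative to the invertible block $L$ together with Sylvester's law of inertia (Haynsworth additivity): the inertia of the symmetric representative $\bigl(\begin{smallmatrix} L & \sqrt{t}\,v \\ \sqrt{t}\,v^{T} & 1 \end{smallmatrix}\bigr)$ equals the inertia of $L$ plus the inertia of the scalar Schur complement, which here is simply $\det(K(t))/\det(L) = 1 - t\,\chi(A)$. In case (a) this scalar is $1 > 0$ for all $t$, so the inertia stays $(p(G)+1,\,n(G),\,0)$; evaluating at $t = 1$ gives $p(H) = p(G)+1$ and $n(H) = n(G)$. In case (b) the scalar $1 - 2t$ is positive for $t < 1/2$ and negative for $t > 1/2$, so the inertia passes from $(p(G)+1,\,n(G),\,0)$ to $(p(G),\,n(G)+1,\,0)$: the eigenvalue associated to the new cell, positive at $t=0$, migrates across $0$ to the negative side. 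Evaluating at $t=1$ gives $p(H) = p(G)$ and $n(H) = n(G)+1$.

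The only genuinely delicate point — the one I would treat most carefully — is fixing the direction of this crossing in case (b). The sign flip of $\det(K(t))$ at $t = 1/2$ by itself only shows that the parity of the negative count changes, which is equally consistent with that count going up by one or down by one; it is the sign of the \emph{scalar Schur complement} $1 - t\,\chi(A)$, supplied by the additivity relative to the invertible block $L$, that pins the crossing to the direction positive $\to$ negative and simultaneously confirms the one-dimensional kernel at $t=1/2$. The non-symmetry of the intermediate matrices $K(t)$ is by contrast only a cosmetic nuisance, removed once and for all by the diagonal conjugation above.
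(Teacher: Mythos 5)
Your proof is correct, and it reaches the corollary by a genuinely different route than the paper. Both arguments rest on the same Proposition $\det(K(t))=(1-t\,\chi(A))\det(L)$, but the paper's proof is a homotopy argument: it follows the eigenvalues along the deformation from $K(0)$ (block triangular, spectrum ${\rm spec}(L)\cup\{1\}$) to $K(1)=K$, uses the determinant identity to locate the only possible zero crossing, and adds that the characteristic polynomial has a simple root there so that exactly one eigenvalue passes to the negative side. You replace this eigenvalue tracking by pure algebra: after symmetrizing $K(t)$ for $t>0$ by the diagonal similarity, Haynsworth/Sylvester inertia additivity relative to the invertible block $L$ (invertible by unimodularity) expresses the inertia of $K(t)$ as the inertia of $L$ plus the sign of the scalar Schur complement, which the Proposition identifies as $1-t\,\chi(A)$; evaluating at $t=1$ yields both inertia statements at once, and the rank bound ${\rm rank}\,K(t)\geq |G|$ forces the one-dimensional kernel in case (b). Two remarks on the comparison. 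First, your symmetrization step actually repairs a point the paper glosses over: for $0<t<1$ the matrix $K(t)$ is not symmetric, so a priori its eigenvalues could leave the real axis, and a complex-conjugate pair can migrate from the positive to the negative real side without the determinant ever vanishing (their product stays positive); the paper's crossing argument tacitly assumes a real spectrum throughout the deformation, which is exactly what your similarity to $\left(\begin{smallmatrix} L & \sqrt{t}\,v \\ \sqrt{t}\,v^{T} & 1 \end{smallmatrix}\right)$ supplies. Second, once Haynsworth is invoked, the deformation is no longer needed for the inertia counts at all --- the inertia of $K(1)$ is computed directly --- and the parameter $t$ only enters the invertibility claims of the corollary, which follow from the determinant identity alone, also for $t\leq 0$ where the symmetrization is unavailable. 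What the paper's approach buys is economy of tools (continuity plus the determinant formula, consistent with its induction over CW attachments); what yours buys is rigor and a closed-form inertia count at every $t>0$, with no appeal to simplicity of the root or to eigenvalue continuity.
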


\paragraph{}
The proof of the result does not directly follow from the {\bf unimodularity theorem}
$|{\rm det}(L)|=1$ or the {\bf energy theorem} $\sum_{x,y} g(x,y) = \chi(G)$, if 
$g=L^{-1}$. We have shown the unimodularity theorem for
discrete CW complexes, which generalize simplicial complexes. Adding a cell $x$ along a sphere $S(x)$ 
multiplies the determinant by $1-\chi(S(x)) = (-1)^{{\rm dim}(x)}$. 
So, if we add an odd dimensional simplex, the determinant gets multiplied by $-1$
meaning that an other eigenvalue $-1$ is added. This does not prove the theorem yet
as it could be possible for example that the addition renders two negative eigenvalues
positive or even more complicated changes of the spectrum happen. 

\paragraph{}
We still have not completely cleared up the meaning of all the matrix entries $g(x,y)$. 
We know $g(x,x)=1-\chi(S(x))$ as this was essential for the energy theorem. Let $W^+(x,y) =W^+(x) \cap W^+(y)$ 
denote the intersections of the stable manifolds of $x$ and $y$. Here $W^+(x)$ is the set of simplices containing $x$
but not being equal to $x$. And $W^-(x)$ is the set of simplices contained in $x$ but not equal to $x$. 
We know therefore that $g(x,y) = (1-\chi(W^+(x,y)))(1-\chi(W^-(x,y)))$ holds if $x=y$ or $y \subset x$
and $x$ is a facet. In the later case this is $\omega(y)$. 
We also know that if $g(x,x)=0$ then $g(x,y)=0$ if $y \subset x$. We have tried in various
ways to modify the in general incorrect formula
$g(x,y) = (1-\chi(W^+(x,y)))(1-\chi(W^-(x,y))$ to get an identity which works for all 
$x,y$. 

\paragraph{}
The modification $g(x,y) = (\chi(x \cap y)-\chi(W^+(x,y)))( \chi(x \cap y) -\chi(W^-(x,y))$
comes already closer as it covers the cases we know, like when there is not connection at all between
$x$ and $y$. It still fails in general. More experimentation is needed. 
An other approach to the puzzle is to give meaning to the minors ${\rm det}(L[x,y])$
as $g(x,y) = (-1)^{i(x)+i(y)}{\rm det}(L[x,y])/{\rm det}(L)$, where $i(x)$ is the position in the
ordering of $G$ defining the matrix $L$. The determinant ${\rm det}(L[x,y])$ is a signed sum over all 
one dimensional paths where one of the connectivity components goes from $x$ to $y$. Representing
Green function values as path integrals is a familiar theme in mathematical physics. 

\section{Corollaries}

\paragraph{}
As the number of positive eigenvalues corresponds to the
number of even-dimensional complete sub-complexes and the number of negative
eigenvalues corresponds to the number of odd-dimensional complete sub-complexes,
we know that $L$ is never positive or negative definite if the dimension of the 
complex $G$ is positive: 

\begin{coro}[There is anti-matter] For any CW complex of positive dimension,
the connection Laplacian is never positive nor negative definite. 
\end{coro}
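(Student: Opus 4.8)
The plan is to read the corollary directly off the spectral formula $p(G)=b(G)$, $n(G)=f(G)$ of Theorem~1, so that essentially no new spectral work is required. Since the unimodularity theorem gives $\det(L)=\pm 1$, the self-adjoint operator $L$ has no kernel, and $p(G)+n(G)=b(G)+f(G)=|G|$. Consequently $L$ is positive definite exactly when $n(G)=0$ and negative definite exactly when $p(G)=0$; by Theorem~1 these two conditions are equivalent to $f(G)=0$ and $b(G)=0$ respectively. The whole statement therefore reduces to the purely combinatorial claim that a complex of positive dimension contains at least one even-dimensional cell and at least one odd-dimensional cell, i.e.\ $b(G)\ge 1$ and $f(G)\ge 1$.

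The even-dimensional cell costs nothing: any nonempty complex contains a $0$-dimensional cell, a vertex, and since $\omega$ assigns $(-1)^{0}=1$ to it, the dimension $0$ is even and $b(G)\ge 1$. Hence $p(G)=b(G)\ge 1$, so $L$ always has a positive eigenvalue and can never be negative definite, independently of the dimension hypothesis. It remains to produce an odd-dimensional cell under the assumption ${\rm dim}(G)>0$, which will give $n(G)=f(G)\ge 1$ and rule out positive definiteness; combining the two bounds then yields the corollary.

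In the simplicial case the odd-dimensional cell is immediate. A complex of positive dimension contains a simplex $x$ with $|x|\ge 2$, and since a simplicial complex is closed under taking nonempty subsets, every two-element subset of $x$ lies in $G$. Such an edge has dimension $1$, which is odd, so $f(G)\ge 1$ and we are done.

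The one genuine point to handle is the general discrete CW setting, where a complex need not be closed under taking faces, so the existence of an edge is not automatic from a face argument; I regard this as the main (and only) obstacle. Here I would argue by induction on dimension along the inductive build-up that every discrete CW complex of dimension $\ge 1$ contains a $1$-cell. The base case is a complex whose top dimension is exactly $1$, which by definition has a $1$-cell. For the inductive step, a complex of dimension $d\ge 2$ has a $d$-cell $x$ attached along a sphere $S(x)$, which is itself a discrete CW complex of dimension $d-1\ge 1$ and a subcomplex of $G$; by the inductive hypothesis $S(x)$ contains a $1$-cell, hence so does $G$. This $1$-cell is odd-dimensional, giving $f(G)\ge 1$ and therefore $n(G)\ge 1$. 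Once this combinatorial lemma is established, both definiteness failures follow at once, and the spectral content of the claim is entirely supplied by Theorem~1.
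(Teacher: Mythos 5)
Your proof is correct and takes essentially the same route as the paper: Theorem 1 reduces non-definiteness to the purely combinatorial claim that a positive-dimensional complex contains both an even-dimensional cell (a vertex) and an odd-dimensional cell, which is exactly the paper's one-sentence proof. Your extra induction producing a $1$-cell in the discrete CW setting fills in a detail the paper glosses over, since its proof sentence speaks only of simplicial complexes while the corollary is stated for CW complexes.
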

\begin{proof}
A positive dimensional simplicial complex contains both even and odd dimensional simplices. 
\end{proof}

\paragraph{}
We can write the sign function ${\rm sign}(x)$ as the imaginary part of 
$\log(i x) (2/(i \pi))$. From the unimodularity theorem follows that
$$ \sum_k \log(i \lambda_k) = i (\frac{\pi}{2})  \chi(G) $$
if the branch of the log has been chosen so that $\log(z) \in [-\pi/2,\pi/2]$ for 
${\rm Re}(z) \geq 0$ or equivalently 

\begin{coro}[Potential theoretical energy]
$$  \chi(G) = \frac{2}{i \pi} {\rm tr}(\log(i L)) \; . $$
\end{coro}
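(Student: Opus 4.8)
The plan is to reduce the statement to two facts already in hand: the spectral formula $\chi(G)=p-n$ from the Theorem, and the unimodularity theorem $|{\rm det}(L)|=1$. Since $L$ is a real symmetric matrix, it is orthogonally diagonalizable, $L=U D U^{\top}$ with $D={\rm diag}(\lambda_1,\dots,\lambda_m)$ and all $\lambda_k$ real; moreover unimodularity forces ${\rm det}(L)=\pm 1\neq 0$, so no eigenvalue vanishes and each $i\lambda_k$ is a nonzero purely imaginary number. First I would define $\log(iL)$ through the spectral calculus, $\log(iL)=U\,{\rm diag}(\log(i\lambda_1),\dots,\log(i\lambda_m))\,U^{\top}$, using the branch of $\log$ with argument in $[-\pi/2,\pi/2]$ on the closed right half-plane; since each $i\lambda_k$ lies exactly on the imaginary axis, its argument is $\pm\pi/2$ and the branch is well defined there. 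Because the trace is invariant under conjugation, this gives at once ${\rm tr}(\log(iL))=\sum_k \log(i\lambda_k)$.

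The next step is the elementary evaluation $\log(i\lambda_k)=\log|\lambda_k|+i(\pi/2)\,{\rm sign}(\lambda_k)$, which merely records that for $\lambda_k>0$ the point $i\lambda_k$ sits on the positive imaginary axis (argument $+\pi/2$) and for $\lambda_k<0$ on the negative imaginary axis (argument $-\pi/2$). Summing over $k$ separates into a real and an imaginary part. The real part is $\sum_k\log|\lambda_k|=\log\prod_k|\lambda_k|=\log|{\rm det}(L)|=\log 1=0$ by the unimodularity theorem. The imaginary part is $(\pi/2)\sum_k{\rm sign}(\lambda_k)=(\pi/2)(p-n)$, which by the Theorem equals $(\pi/2)\chi(G)$. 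Hence ${\rm tr}(\log(iL))=i(\pi/2)\chi(G)$, and multiplying by $2/(i\pi)$ yields the claimed identity $\chi(G)=(2/(i\pi))\,{\rm tr}(\log(iL))$.

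I do not expect a genuine obstacle here; the content is carried entirely by the two cited results, and the only care required is bookkeeping of the branch cut. The one point worth stating explicitly is that the purely imaginary eigenvalues $i\lambda_k$ lie on the boundary of the region $\{{\rm Re}(z)\geq 0\}$ on which the chosen branch is specified, so one must check that the branch extends continuously to this boundary with argument exactly $\pm\pi/2$; this is immediate, and it is precisely here that unimodularity is used a second time, guaranteeing $\lambda_k\neq 0$ so that no $i\lambda_k$ hits the origin where $\log$ is singular. An equivalent and perhaps cleaner route would avoid the branch entirely by taking ${\rm tr}(\log(iL))=\sum_k\log|\lambda_k|+i(\pi/2)\sum_k{\rm sign}(\lambda_k)$ as the definition and reading off real and imaginary parts, so that the corollary becomes a direct restatement of $\sum_k\log|\lambda_k|=0$ together with $\sum_k{\rm sign}(\lambda_k)=\chi(G)$.
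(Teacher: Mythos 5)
Your proof is correct and follows essentially the same route as the paper: the paper also writes ${\rm sign}(\lambda)$ as the imaginary part of $\log(i\lambda)\,(2/(i\pi))$ with the branch chosen so that $\log(z)\in[-\pi/2,\pi/2]$ for ${\rm Re}(z)\geq 0$, kills the real part $\sum_k \log|\lambda_k|$ via the unimodularity theorem $|{\rm det}(L)|=1$, and identifies $\sum_k {\rm sign}(\lambda_k)=p-n$ with $\chi(G)$ by the spectral formula. Your write-up simply makes explicit the bookkeeping (spectral calculus, branch continuity on the imaginary axis, nonvanishing of eigenvalues) that the paper leaves implicit.
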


\section{Isospectral complexes}

\paragraph{}
The question whether the spectrum of $L(G)$ determines $G$
is equivalent to the question whether the adjacency spectrum of the 
connection matrix determines $G$. The topic of finding relations between 
a geometry space and the spectrum of its Laplacian is a classical 
one \cite{Kac66} and has been ported to finite geometries like graphs. 
We give in this section examples of non-isospectral complexes with respect to the 
connection Laplacian. 

\paragraph{}
As the eigenvalues of $L$ do not determine the complex $G$ one can ask whether the
eigenvectors do. We know that the eigenvalues and eigenvectors together determine
$L$ and so $G$. Now we can look for the class of unitary matrices $U$ for which
the operator $U D U^T$ has an inverse $g$ for which the sum $\sum_{x,y} g(x,y)$ is
the Euler characteristic. 

\begin{figure}[!htpb]
\scalebox{0.24}{\includegraphics{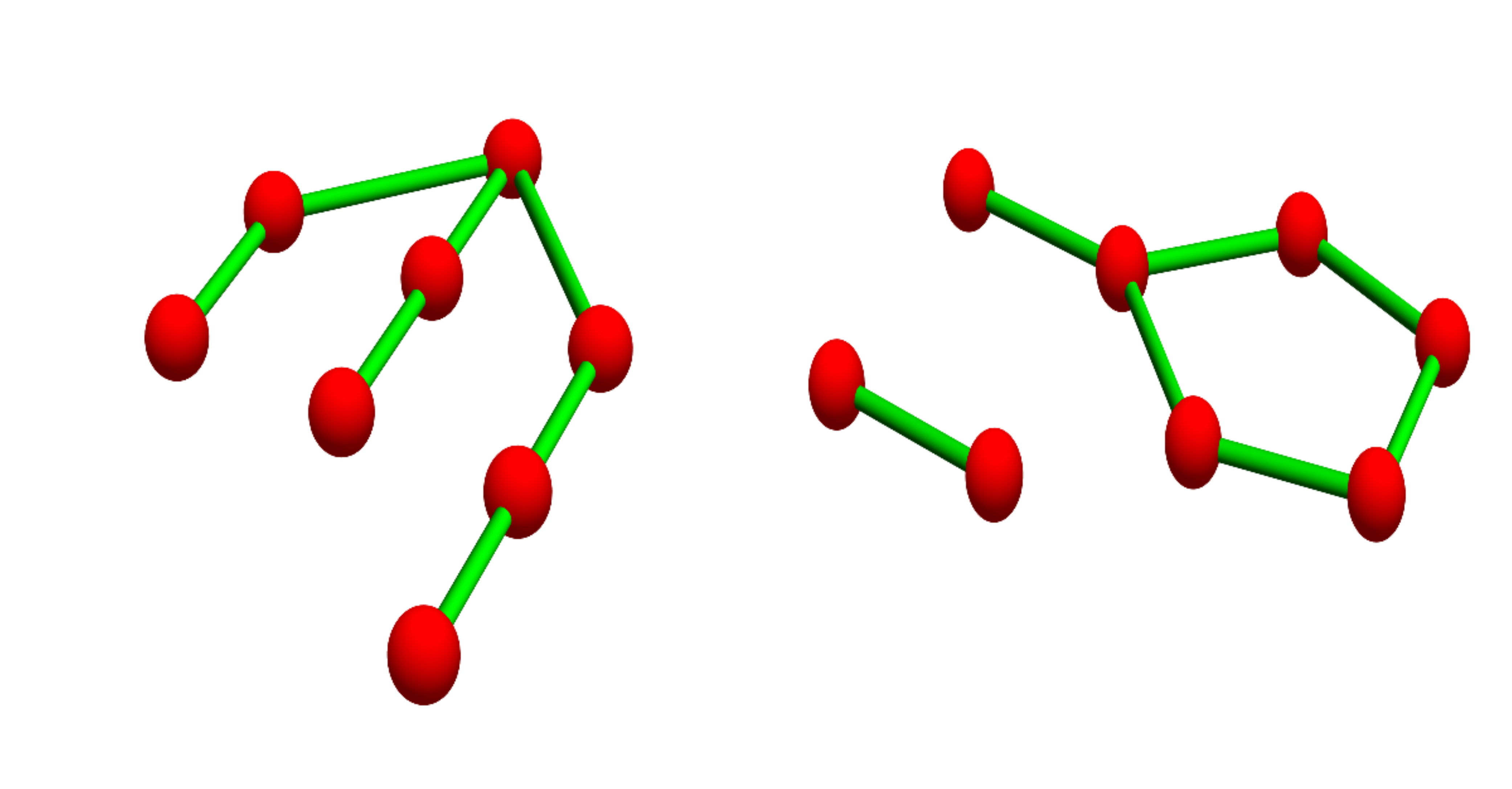}}
\label{example}
\caption{
{\bf Example 1:} The complex
$G=\{\{1\}$, $\{2\}$, $\{3\},\{4\}$, $\{5\},\{6\}$, $\{7\},\{8\}$, $\{1,2\},\{1,3\}$, 
$\{2,6\},\{2,7\}$, $\{4,5\}$, $\{6,8\},\{7,4\}\}$ and the complex
$H=\{\{1\},\{2\}$, $\{3\},\{4\}$, $\{5\},\{6\}$, $\{7\},\{8\}$, $\{1,2\},\{1,5\}$, 
$\{1,7\},\{2,8\},\{3,4\},\{5,6\},\{8,6\}\}$ are isospectral. 
}
\end{figure}

\begin{figure}[!htpb]
\scalebox{0.24}{\includegraphics{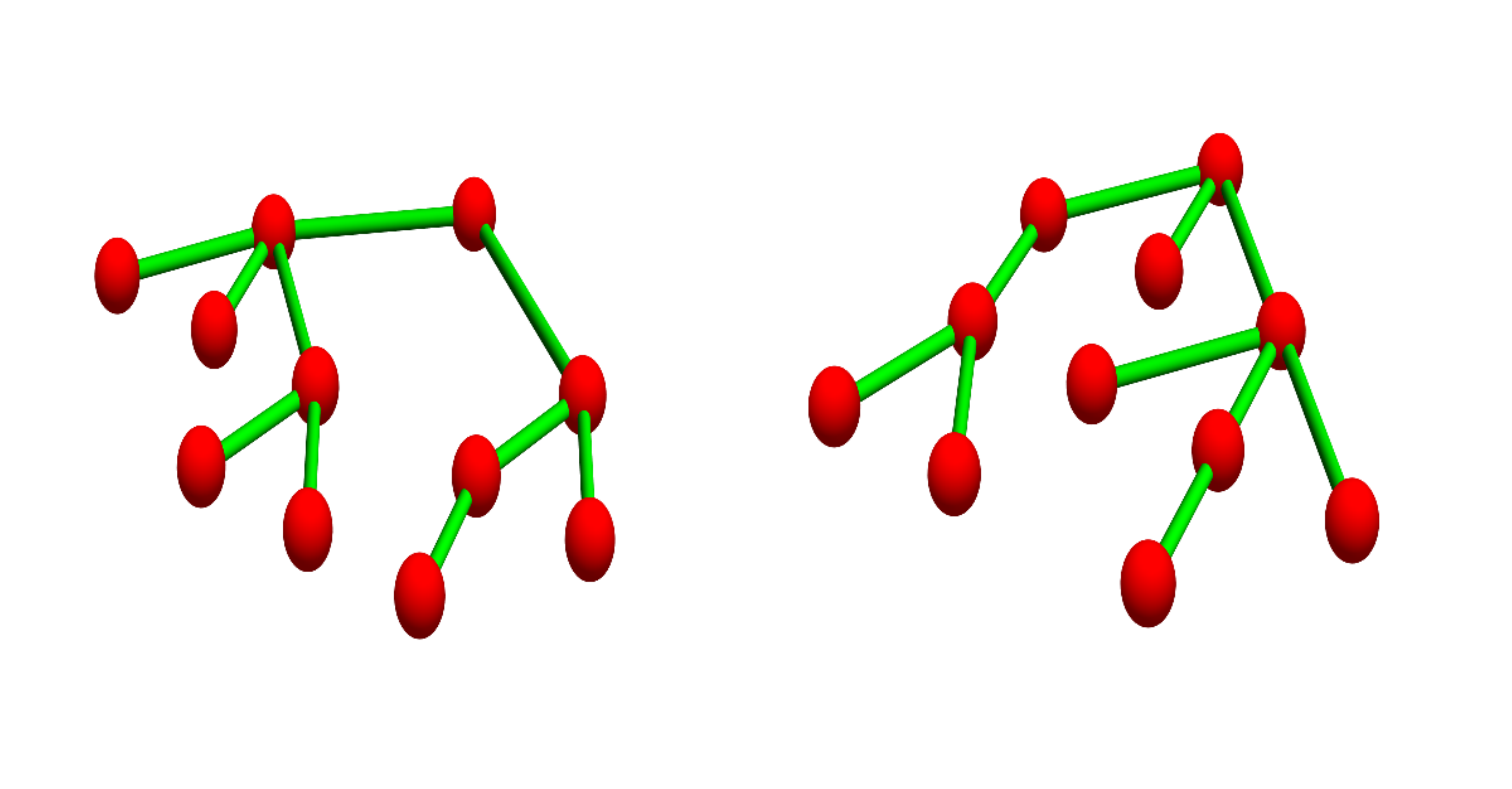}}
\label{example}
\caption{
{\bf Example 2:} The complex
$G=\{\{1\},\{2\},\{3\}$, $\{4\},\{5\},\{6\},\{7\}$, $\{8\}$, $\{9\},\{10\},\{11\}$, $\{1,2\},\{2,3\}$,
   $\{2,4\},\{2,5\},\{4,6\},\{5,8\},\{5,11\}$, $\{6,7\},\{6,10\},\{7,9\}\}$ and
the complex 
$H=\{\{1\},\{2\},\{3\},\{4\},\{5\},\{6\},\{7\}$, $\{8\},\{9\},\{10\}$, $\{11\},\{1,2\},\{1,3\}$,
   $\{2,5\},\{2,6\},\{3,4\}$, $\{3,7\},\{7,8\}$, $\{7,9\},\{7,11\},\{9,10\}\}$ 
are isospectral. 
}
\end{figure}

\paragraph{}
Remarkably, the {\bf Wu characteristic} 
$$  \omega(G) =  \omega_2(G) = \sum_{x,y, x \cap y \neq \emptyset} \omega(x) \omega(y) $$ 
of the two complexes in these examples agree. In example 1), the Wu characteristic is $\omega_2=1$.
Also the higher Wu characteristic agree: $\omega_3=-5$ and $\omega_4=13$. 
In example 2), there are both the same $9$. Also the higher Wu characteristic
$\omega_3(G)=-35, \omega_4(G)=105$ both do agree. This prompts the question: \\

{\bf Question:} are the Wu characteristics $\omega_k$ determined by the spectrum?
We remind that  $\omega_k(G)$ is the sum over all products $\omega(x_1) \cdots \omega(x_k)$ 
of simultaneously intersecting simplices $x_1, \dots, x_k$ in $G$. So far we only know the answer
for the Euler characteristic $\omega_1(G)= \chi(G)$. 

\bibliographystyle{plain}

\end{document}